\pgfplotsset{compat=1.18}
\newcommand{\R}{\mathbb{R}}
\theoremstyle{plain}
\newtheorem{teor}{Theorem}[section]
\newtheorem{lema}[teor]{Lemma}
\newtheorem{prop}[teor]{Proposition}
\theoremstyle{definition}
\newtheorem{defi}[teor]{Definition}
\theoremstyle{remark}
\newtheorem{nota}[teor]{Remark}
\numberwithin{equation}{section}
\renewcommand{\theequation}{\thesection.\arabic{equation}}
\newcommand{\dx}[0]{\,\mathrm{d}x}
\title{An improved stability result for Grünbaum's inequality}
\author{Luca Tanganelli Castrillón}
\thanks{This work was conducted within the scope of the program ``Barcelona Introduction to Mathematical Research'', with the generous support of Barcelona's Centre de Recerca Matemàtica. I especially thank Clara Torres Latorre for her dedicated guidance throughout the writing of this article, as well as Grégory Miermont for his insightful comments.}
\date{September 2024}
\begin{document}

\begin{abstract}
    Given a hyperplane $H$ cutting a compact, convex body $K$ of positive Lebesgue measure through its centroid, Grünbaum proved that
    $$\frac{|K\cap H^+|}{|K|}\geq \left(\frac{n}{n+1}\right)^n,$$where $H^+$ is a half-space of boundary $H$. The inequality is sharp and equality is reached only if $K$ is a cone. Moreover, bodies that almost achieve equality are geometrically close to being cones, as Groemer showed in 2000 by giving his stability estimates for Grünbaum's inequality. In this paper, we improve the exponent in the stability inequality from Groemer's $\frac{1}{2n^2}$ to $\frac{1}{2n}$.
\end{abstract}

\maketitle

\section{Introduction}

An inequality of Grünbaum's (\cite{Gru1}) states that, given a compact, convex body $K\subset\R^n$, $n\in \mathbb{N}^*$, of positive Lebesgue measure and a hyperplane $H$ cutting through the centroid of $K$, we have
\begin{equation}\label{grunbaum}
    \frac{|K\cap H^+|}{|K|}\geq \left(\frac n{n+1}\right)^n=:q_n
\end{equation}
where $H^+\subset \R^n$ is a closed half-space of boundary $H$, (we call $H^-$ the complement of $H^+$) and $|\cdot|$ denotes the Lebesgue measure.

Inequality \eqref{grunbaum} is sharp and equality is attained if and only if $K$ is a cone and $H$ is parallel to the base of $K$. Throughout this text, the term \textit{cone} will refer to the convex hull of a point called its \textit{vertex} and some hyperplanar, compact, convex set called its \textit{base}.

Grünbaum's inequality (GI) has been thoroughly studied since its appearance. Geometrical generalizations have been recently proposed by Stephen and Zhang (\cite{KK}, \cite{JJ}) as well as functional generalizations (see \cite{FF}).

Lying at the heart of convex geometry, GI is commonly deduced from Brunn-Minkowski's inequality (BMI) (see Gardner \cite{BM1}), which, given Lebesgue-measurable sets $A,B\subset \mathbb{R}^n$ such that $A+B$ is also measurable, states that $$|A+B|^{\frac 1 n}\geq |A|^{\frac 1 n}+|B|^{\frac 1 n}$$or, equivalently, for all $\lambda\in [0,1]$,
\begin{equation}\label{grun}
    |\lambda A+(1-\lambda)B|^{\frac 1 n}\geq \lambda |A|^{\frac 1 n}+(1-\lambda)|B|^{\frac 1 n}
\end{equation}
where $\alpha X+\beta Y=\{\alpha x+\beta y:x\in X,y\in Y\}$ is the Minkowski sum of $\alpha X$ and $\beta Y$. BMI also entails the isoperimetric inequality, which, for a sufficiently smooth body $E\subset \mathbb{R}^n$ of perimeter $P(E)$, ensures that
$$P(E)\geq n|E|^\frac{n-1}{n}|B_n|^{\frac 1 n}$$
where $B_n$ is the $n$-dimensional unit ball.
\\

There are at least two natural ways of studying geometric inequalities like these. One is to search for a \textit{reverse} inequality: if some known quantities provide an upper (resp., lower) bound on an unkown quantity, do they also provide a lower (resp., upper) bound on it? A remarkable reverse version of BMI was discovered by Milman \cite{MM}; and one of multiple reverse versions of the isoperimetric inequality can be found in \cite{BB}. Although there aren't any non-trivial reverse versions of GI, we briefly discuss the reversability of our main result in remark \ref{not1}.

Another approach to studying geometric inequalities where equality can be reached begins by determining which bodies yield the equality (sometimes called \textit{optimizers}). At this stage, two questions arise. On one hand, one inquires whether objects that are geometrically close to an optimizer are also close to giving an equality: this is often the case provided that the functions involved in our inequality are in some sense continuous; in this paper, we choose to call such inequalities \textit{consistent}. On the other hand, one tends to prefer the less obvious, reciprocal question: if a body \textit{nearly} yields the equality, is it necessarily geometrically close to an optimizer? This is referred to as studying the \textit{stability} of the inequality. In practice, stability is usually more helpful than consistency in that it allows one to infer complex properties of a body from an often simpler metric. (Obviously, the stability of an inequality depends greatly on the choice of metric for geometrical closeness, which will often be the most natural one.) Interestingly enough, most geometric inequalities display stability despite not being consistent. For a review of the recent advances in the subject of stability, see Figalli's note \cite{Fig1}.

For example, the optimizers of BMI are precisely the pairs $(A,B)$ of homothetic convex sets. One then checks, by summing together Cantor sets, that BMI does not exhibit straightforward consistency if we take, as a measure of the geometrical closeness between measurable sets $X$ and $Y$, the Lebesgue measure of their symmetric difference, $|X\Delta Y|$, where $$X\Delta Y = (X\cup Y)\setminus (X\cap Y).$$
In contrast, BMI does enjoy stability, as was proven by Figalli and Jerison \cite{SS}.

The isoperimetric inequality isn't either consistent, unless, for instance, an upper bound is imposed on the curvature of the boundary. It is, however, stable as proven by Hall, Weitsman and Hayman \cite{HH,HHH}.
\\

It is easy to see that GI is consistent. With $K$ and $H$ defined as above, suppose that $H$ also cuts through the centroid of a cone $C$ whose base is parallel to $H$. Suppose $C$ and $H$ are fixed. Then, since $$|K\Delta C|\geq |(K\cap H^+)\Delta (C\cap H^+)|$$it is clear that
$$\lim _{|K\Delta C|\to 0}\frac{|K\cap H^+|}{|K|}=\frac{|C\cap H^+|}{|C|}.$$

Groemer \cite{Groemer} proved that Grünbaum's inequality is stable. To formulate his result, we define the ``aconicity'' index of $K$,
$$A(K)=\inf\left\{ \frac{|K\Delta C|}{|K|} : C\subset \R^n \text{ is a cone } \right\}.$$
We then have the following stability estimate.

\begin{teor}[Groemer \cite{Groemer}] \label{groe_theo}Let $K$ be a compact and convex subset of $\mathbb{R}^n$, $n\geq 2$, of positive Lebesgue measure and $H$ be a hyperplane passing through the centroid of $K$. If $H^+$ is a half-space of boundary $H$, then
    $$A(K)\leq D(n) \left(\frac{|K\cap H^+|}{|K|}-q_n\right)^{\frac{1}{2n^2}}$$
    for some positive constant $D(n)$ depending only on $n$.
\end{teor}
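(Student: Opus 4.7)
The plan is to study the slicing profile. After translating so that the centroid of $K$ is the origin and rotating so that $H = \{x_n = 0\}$, define
$$g(t) := |K \cap \{x_n = t\}|^{\frac{1}{n-1}},$$
which, by Brunn-Minkowski \eqref{grun}, is a non-negative concave function on its support $[a,b]$ with $a < 0 < b$. The centroid hypothesis reads $\int_a^b t\, g(t)^{n-1}\, dt = 0$, and the Grünbaum quantity becomes $\gamma := \int_0^b g^{n-1}\big/\int_a^b g^{n-1}$. The equality case of \eqref{grunbaum} corresponds to $g$ being affine with $g(b)=0$ and $a/b = -1/n$; write $\ell(t) := g(0)(1 - t/b)$ for the target affine profile. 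All constants implicit in the symbol $\lesssim$ below are to depend only on $n$.

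\textbf{Step 1: Quantitative $1$D stability.} Given $\gamma - q_n = \varepsilon$, I first want to show that $g$ must be close to an affine function of the form $\ell$. Write $g = \ell + h$ with $h$ concave and vanishing at the endpoints, and expand $\gamma(g)$ around $g = \ell$. By the centroid constraint the first-order variation in $h$ vanishes, and the second-order term should give a coercive positive quadratic form on the space of admissible perturbations, yielding $\|h\|_{L^2(a,b)} \lesssim \sqrt{\varepsilon}$.

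\textbf{Step 2: Transfer to $K$.} Next I would promote this $1$D estimate into geometric closeness. Define the reference cone $C$ with vertex at height $b$ and base equal (up to translation) to $K \cap \{x_n = a\}$, so that $|C \cap \{x_n = t\}|^{1/(n-1)} = \ell(t)$ by construction. The pointwise difference $|g(t)^{n-1} - \ell(t)^{n-1}|$ is $\lesssim |g(t) - \ell(t)|$ (using boundedness of $g$ and $\ell$), and integrating controls the volume mismatch between $K$ and $C$. The genuine difficulty is that equality of slice areas does \emph{not} imply equality of the slices themselves; to upgrade slicewise area-closeness to an estimate on $|K\Delta C|$, one must also show that the cross-sections of $K$ are close in shape to the cross-sections of $C$. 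Here I would invoke Figalli-Jerison's quantitative Brunn-Minkowski stability \cite{SS}, exploiting that the slices of $K$ form a concave family of convex sets while the slices of $C$ are linear dilates of a common base: applying BMI-stability at two well-chosen heights and propagating by convexity should force the cross-sections of $K$ to be approximate dilates of a single set.

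\textbf{Main obstacle.} The crux is Step 2: the improvement of exponent from Groemer's $\tfrac{1}{2n^2}$ to $\tfrac{1}{2n}$ hinges on carrying out the $1$D-to-$n$D transfer with a \emph{single} dimensional loss of exponent $\tfrac{1}{n}$, rather than the iterated dimensional loss implicit in Groemer's argument. Concretely, the $\sqrt{\varepsilon}$ from Step~1 combined with one $n$-th-root conversion (through BMI-stability on slices, or a direct convexity estimate comparing $K$ to a suitable symmetrization) should yield the final exponent $\tfrac{1}{2n}$. Avoiding a second dimensional degradation, while still extracting enough information from $\|h\|_{L^2}\lesssim\sqrt{\varepsilon}$ to pin down the shape of every slice of $K$, is what I anticipate as the main technical difficulty.
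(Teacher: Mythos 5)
Your Step 1 is broadly in the spirit of the paper's approach, but the coercivity-of-the-second-variation argument is not what is done; instead the paper proves an elementary $L^1$ estimate. It shows that the profile difference $h := c^{n-1} - g^{n-1}$ (where $c$ is an affine comparison profile, \emph{not} equal to your $\ell$ — its support $[a',b']$ generally extends strictly beyond $[a,b]$) changes sign exactly once on each of $(-\infty,0]$ and $[0,\infty)$, that $\int h\chi_{(-\infty,0)} = \int h\chi_{[0,\infty)} = 0$, and that $|h|$ is bounded by a constant depending only on $n$. A short Chebyshev-type lemma (Lemma~\ref{intxf}) then turns the first-moment bound $\int x h(x)\dx \lesssim_n d$ into $\int|h| \lesssim_n \sqrt d$, where $d = |K\cap H^+|^{1/n}-q_n^{1/n}$. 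No expansion to second order and no $L^2$ machinery is needed, and this $L^1$ bound is the form directly usable for a volume estimate.

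The genuine gap is exactly where you flagged it, in Step 2, but the resolution has nothing to do with Figalli–Jerison. You should not need, and the paper does not use, any quantitative Brunn–Minkowski stability to compare slice \emph{shapes}. The trick is to pick the competitor cone so that the slice-by-slice containments go the right way automatically: take $C = \mathrm{Conv}\bigl(\{a'\}\times(\beta + \tfrac{b-a'}{b}(K_0 - \beta)),\, S\bigr)$ with $S=(b,\beta)$ an endpoint of $K$ on the first axis, so that $C_0 = K_0$ and the vertex $S$ lies in $K$. By convexity of $K$ one then has $C\cap H^+ \subset K\cap H^+$ (the cone over $K_0$ from $S\in K$ stays inside $K$), and $K\cap([a',0]\times\R^{n-1}) \subset C$ (every point of $K$ at height $x\in[a',0]$ lies on the segment from $S$ to a point of $C_0 = K_0$). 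Consequently $|K\Delta C| = \int |g^{n-1}-s^{n-1}|$ \emph{exactly}, with $s$ the profile of $C$, and the problem reduces entirely to 1D. Your proposed route of applying BMI-stability at two heights and propagating by convexity would face severe dimensional-dependence and quantitative-rate problems, and the whole detour is unnecessary.

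Two smaller issues with your setup. First, your reference cone has base $K\cap\{x_n=a\}$, which may degenerate to a single point, and your target profile $\ell$ is pinned to the actual support $[a,b]$ of $g$; but $g$ can be strictly positive at $a$ and $b$ (flat faces), so $g$ is typically not close to \emph{this} affine profile at all. The paper instead uses a comparison profile $c$ supported on a possibly strictly larger interval $[a',b']$ determined by $|K_0|$ and $|K\cap H^+|$, and a separate estimate (Proposition~\ref{bprimebbound}) that $b'-b\lesssim_n d^{1/(2n)}$. Second, you have not normalized $K$: the paper's bounds crucially hinge on the normalization (H3) that $\max_x|K_x|=1$ together with $|K|=1$, which yields a priori bounds $|a|,|b|<n$ and $|K_0|\geq 1/(3n)$; without something like this your implicit constants cannot depend only on $n$.
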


In our paper, we will show that the exponent $\frac{1}{2n^2}$ can be replaced by $\frac{1}{2n}$.

\begin{teor}\label{maintheo}
    Under the same conditions as in theorem \ref{groe_theo}, we have
$$A(K)\leq 3^{n+7}n^{n+2} \left(\frac{|K\cap H^+|}{|K|}-q_n\right)^{\frac{1}{2n}}.$$
\end{teor}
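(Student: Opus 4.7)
Translate so that the centroid of $K$ is at the origin and $H=\{x_n=0\}$. Since $q_n<1/2$, we may assume (exchanging $H^+$ with its complement if necessary, which is harmless) that $K\cap H^+$ is the \emph{small} side. Set $a:=\min_{K}x_n<0<b:=\max_{K}x_n$ and define
\[
f(t):=\bigl|K\cap\{x_n=t\}\bigr|^{1/(n-1)},\qquad t\in[a,b].
\]
By Brunn--Minkowski $f$ is concave on $[a,b]$, and the total volume $|K|=\int_a^b f^{n-1}dt$, the Grünbaum ratio $\int_0^b f^{n-1}/\int_a^b f^{n-1}$ and the centroid condition $\int_a^b tf^{n-1}dt=0$ all depend only on $f$. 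Let $\delta:=|K\cap H^+|/|K|-q_n$.

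\textbf{Comparison cone and the key $1$D estimate.} Pick any $v\in K$ with $v_n=b$ and set $C_K:=\mathrm{conv}(\{v\}\cup(K\cap\{x_n=a\}))\subseteq K$; its section radius is the linear function $\ell(t):=f(a)(b-t)/(b-a)$, and concavity of $f$ together with $f(a)=\ell(a)$ and $f(b)\ge 0=\ell(b)$ forces $f\ge\ell$ on $[a,b]$. The function $\varphi:=f-\ell$ is therefore concave, non-negative, and vanishes at $t=a$. The central analytic claim is the one-dimensional stability estimate
\[
\|\varphi\|_\infty\;\le\;c(n)\,(b-a)\,\delta^{1/(2n)}.
\]
To prove it, expand $f^{n-1}=(\ell+\varphi)^{n-1}$ by the binomial theorem and substitute into the numerator of $\delta$. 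The centroid constraint $\int_a^b tf^{n-1}dt=0$, combined with its cone counterpart, eliminates the purely linear term in $\varphi$ from $\delta$, so that $\delta$ dominates a weighted $L^2$-type functional in $\varphi$ with weight $\ell^{n-3}$. The concavity of $\varphi$ with $\varphi(a)=0$ supplies the ``tent'' lower bound
\[
\varphi(t)\;\ge\;\|\varphi\|_\infty\cdot\min\!\Bigl(\tfrac{t-a}{t_{*}-a},\,\tfrac{b-t}{b-t_{*}}\Bigr),
\]
where $t_*\in[a,b]$ is a maximiser of $\varphi$, and inverting the weighted inequality against this tent produces the $1/(2n)$ power.

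\textbf{Passage to $n$D, and the main difficulty.} The algebraic identity $f^{n-1}-\ell^{n-1}=(f-\ell)\sum_{k=0}^{n-2}f^{k}\ell^{n-2-k}\le(n-1)\|f\|_\infty^{n-2}\varphi$ gives
\[
|K\Delta C_K|\;=\;|K|-|C_K|\;=\;\int_a^b(f^{n-1}-\ell^{n-1})\,dt\;\le\;(n-1)\,\|f\|_\infty^{n-2}(b-a)\,\|\varphi\|_\infty.
\]
Minkowski's classical centroid inequalities $-a\le nb$ and $b\le-na$, together with bounds on $\|f\|_\infty/f(a)$ and $(b-a)f(a)^{n-1}/|K|$ obtained by applying Grünbaum's inequality itself to natural truncations of $K$, control every remaining scale-invariant ratio by $n$-dependent constants; carefully tracking these through the chain produces the explicit prefactor $3^{n+7}n^{n+2}$. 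The crux of the argument is the $1$D stability above: the weight $\ell^{n-3}$ degenerates (or blows up, when $n\le 2$) at the vertex $t=b$, so passage from a weighted integral control of $\varphi$ to an $L^\infty$ bound is not uniform. It is precisely the combination of the tent mechanism with the $n$-th order vanishing of the weight that fixes the exponent at $1/(2n)$ instead of the naive $1/2$. Groemer's weaker exponent $1/(2n^2)$ arises from carrying out this inversion in two successive stages, each costing a factor of $n$; doing it in a single step saves one of these factors.
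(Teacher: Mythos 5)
The paper's improvement over Groemer's $1/(2n^2)$ hinges precisely on \emph{not} using an inscribed cone: the introduction of Section~3 states ``Groemer found one such cone to lie inside $K$\dots We will exploit the convex nature of $K$ to find a cone that neither contains nor is contained by $K$.'' Your proposal reverts to the inscribed cone $C_K=\mathrm{conv}(\{v\}\cup(K\cap\{x_n=a\}))\subseteq K$ (Groemer's construction, up to inessential choices), so the burden of getting $1/(2n)$ instead of $1/(2n^2)$ falls entirely on the claimed one-dimensional estimate $\|\varphi\|_\infty\le c(n)(b-a)\delta^{1/(2n)}$, which is only sketched. By contrast, the paper's cone $C$ has apex $S=(b,\beta)$ but base at $x_1=a'$ with $C_0=K_0$, giving $C\cap H^+\subset K\cap H^+$ while $C\supset K$ on $[a',0]$, which is what makes $|K\Delta C|=\int|g^{n-1}-s^{n-1}|$ exactly computable; the estimate is then split via a \emph{second} comparison profile $c$ matched to $K$'s partial volumes, with $\int|c^{n-1}-g^{n-1}|$ controlled by Lemma~\ref{intxf} and $\int|c^{n-1}-s^{n-1}|$ bounded by elementary geometry.

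The sketched 1D estimate has a genuine gap. You claim the centroid constraints ``eliminate the purely linear term in $\varphi$ from $\delta$,'' but the cone $C_K$ does \emph{not} have its centroid on $H$: its centroid sits at $x_n=(b+na)/(n+1)\le 0$. When you write $\delta = \int_0^b(\ell+\varphi)^{n-1}\,/\int_a^b(\ell+\varphi)^{n-1}-q_n$ and expand, the zeroth-order (in $\varphi$) contribution is $\frac{b^n}{(b-a)^n}-q_n$, which by Minkowski's centroid inequality $b\le -na$ is in fact \emph{nonpositive}. So $\delta$ small does not immediately dominate a positive quadratic functional of $\varphi$; the negative zeroth-order piece has to be compensated by the $\varphi$-dependent terms, and nothing in the sketch shows how that trade is won. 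The further claim that ``doing the inversion in a single step saves one factor of $n$'' is asserted, not argued, and it is precisely the kind of step that must be spelled out for the result to stand. For what it is worth, the same difficulty is the reason the paper abandons the inscribed cone: with $C$ as in Section~3 the relevant comparison profile $c$ genuinely is centred ($c$ is matched so that $\int_{a'}^0c^{n-1}=|K\cap H^-|$ and $\int_0^{b'}c^{n-1}=|K\cap H^+|$), which is exactly what enables the clean sign-change hypothesis of Lemma~\ref{intxf} and the $\sqrt{d}$-type bound in \eqref{inthbound}.
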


One should note that, when $n=1$, both theorems are trivially true but uninteresting, due to the fact that if $K\subset\mathbb{R}$ is compact and convex then $K$ \textit{is} a cone, whence $A(K)=0$. Throughout this paper, we shall consider that $n\geq 2$.

\begin{nota}\label{not1}
    One quickly sees that the inequality in theorem \ref{maintheo} is not reversible, that is, no lower bound on $A(K)$ can be given in terms of $\frac{|K\cap H^+|}{|K|}-q_n$. Indeed, given any cone $\mathcal{C}$ and any $\alpha\in[q_n,1-q_n]$, there exists some hyperplane $\Pi$ cutting through the centroid of $\mathcal{C}$ and a half-space $\Pi^+$ of boundary $\Pi$ such that $\frac{|\mathcal{C}\cap \Pi^+|}{|\mathcal{C}|}=\alpha$. Such a $\Pi$ is found by continuously rotating an initial plane, parallel to the base of $\mathcal{C}$ and cutting through its centroid, halfway around and back to itself and arguing about intermediate values.
\end{nota}

\section{Preliminaries}

Given a Lebesgue-integrable function $f:\mathbb{R}\to\mathbb{R}$, its integral over $\mathbb{R}$ will be denoted by $\int f(x)\dx$,  $\int f\dx$ or simply $\int f$; we will specify the integration bounds only when needed.

The purpose of this section is to study the geometry of $K$; this will require defining two central functions and deriving a series of inequalities that will lay the groundwork for section 3, where we will find a cone that provides the desired bound on $A(K)$.

To begin with, we need the following technical lemma.

\begin{lema}\label{intxf}
    Let $f\in L^1(\mathbb{R})$ such that $(x\mapsto xf(x))\in L^1(\mathbb{R})$. Suppose that $\int f\dx=0$ and that there is some $w\in \mathbb{R}$ such that $f(x)\leq 0$ whenever $x<w$ and $f(x)\geq 0$ whenever $x>w$. Then
    $$\int xf(x)\dx\geq 0.$$
    Moreover, if $|f|\leq M$, then $\int |f|\dx\leq 2\sqrt{M\int xf(x)\dx}$.
\end{lema}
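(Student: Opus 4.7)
The plan is to first reduce to a shifted variable that makes the sign structure transparent. Since $\int f\,dx = 0$, we can write
$$\int xf(x)\dx=\int (x-w)f(x)\dx + w\int f(x)\dx=\int (x-w)f(x)\dx.$$
The sign hypothesis now says that on $\{x<w\}$ both $(x-w)$ and $f(x)$ are $\leq 0$, while on $\{x>w\}$ both are $\geq 0$. Thus $(x-w)f(x)\geq 0$ pointwise, and the first assertion follows.

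For the quantitative part, set $I:=\int_w^{\infty} f(x)\dx$. Using $\int f=0$ gives $\int_{-\infty}^w(-f)\dx=I$, hence $\int|f|\dx=2I$. Squaring the target inequality and rearranging shows it is equivalent to $\int xf\dx\geq \frac{I^2}{M}$. I will prove this by splitting $\int(x-w)f(x)\dx$ at $w$ and showing that each half is at least $\frac{I^2}{2M}$.

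The central observation, which I expect to be the main (though mild) obstacle, is a concentration/rearrangement argument: among nonnegative functions $f$ on $[w,\infty)$ with $f\leq M$ and $\int_w^\infty f=I$, the weighted integral $\int_w^\infty (x-w)f(x)\dx$ is minimized when $f$ is pushed as close to $w$ as possible, namely by $f^\star:=M\mathbf{1}_{[w,\,w+I/M]}$, which yields the value $\frac{I^2}{2M}$. To make this rigorous without invoking external rearrangement theory, I will apply the first part of the present lemma to the function $g:=f-f^\star$, which is in $L^1$, satisfies $\int g\dx=0$, is $\leq 0$ on $[w,w+I/M]$ and $\geq 0$ on $[w+I/M,\infty)$. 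That first part, with pivot $w+I/M$, produces
$$0\leq \int (x-(w+I/M))\,g(x)\dx=\int(x-w)g(x)\dx,$$
so $\int_w^\infty (x-w)f\dx\geq \int_w^\infty(x-w)f^\star\dx=\frac{I^2}{2M}$.

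An entirely symmetric argument on $(-\infty,w]$, using $f^\star:=-M\mathbf{1}_{[w-I/M,\,w]}$ and the reflection $x\mapsto 2w-x$, gives the matching bound $\int_{-\infty}^w(x-w)f\dx\geq \frac{I^2}{2M}$. Adding the two halves produces $\int xf\dx\geq \frac{I^2}{M}=\frac{(\int|f|\dx)^2}{4M}$, and taking square roots yields the stated inequality.
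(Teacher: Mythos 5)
Your proof is correct and follows essentially the same strategy as the paper's: each half of $\int (x-w)f$ is bounded below by $I^2/(2M)$ by comparing $f$ to the bang-bang competitor $M\mathbf{1}$ on an interval of the same mass, and the two halves are summed. The only cosmetic difference is that you re-invoke the already-proved first part of the lemma (with pivot $w+I/M$) to obtain the comparison, whereas the paper re-derives the same pointwise sign observation $(x-I/M)(\psi-\phi)\geq 0$ directly in its Step~1; a minor notational slip is that you should write $g=f\chi_{[w,\infty)}-f^\star$ (not $g=f-f^\star$) so that $\int g=0$ actually holds.
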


\begin{proof}We first see that $f(x)(x-w)\geq 0$ for all $x\in\R$, thus $\int xf(x)\dx=\int (x-w)f(x)\dx\geq 0$. Now, under the assumption that $|f|\leq M$, the second statement will be proven in two steps.

\textit{Step 1)} We will prove that, if $\psi\in L^1([0,+\infty))$ is such that $0\leq \psi\leq M$ and $(x\mapsto x \psi(x))\in L^1([0,\infty))$, then, if $I=\int \psi$, we have $$\int x \psi(x)\dx\geq  \frac{I^2}{2M}.$$ Indeed, let $\phi=M\chi_{[0,I/M]}$. Then $\int \psi=\int\phi$ and $(x-I/M)(\psi(x)-\phi(x))\geq 0$ for all $x\in[0,+\infty)$ thus
$$0\leq \int (x-I/M)(\psi(x)-\phi(x))\dx=\int x(\psi(x)-\phi(x))\dx$$$$=\int x\psi(x)\dx-\int x\phi(x)\dx=\int x\psi(x)\dx-\frac{I^2}{2M}\,$$
as we wanted.

\textit{Step 2)} Since $\int f\dx=0$, we have$$\int xf(x)\dx =\int (x-w)f(x-w)\dx =\int xf(x-w)\dx-w\int f\dx = \int xf(x-w)\dx$$ so we may suppose without loss of generality that $w=0$. Then \begin{align*}
    \int xf(x)\dx&=-\int_{-\infty}^0xf^-(x)\dx+\int_0^{+\infty}xf^+(x)\dx\\
    &=\int_0^{+\infty}xf^-(-x)\dx+\int_0^{+\infty}xf^+(x)\dx\\
    &\geq\frac{(\int f^-)^2}{2M}+\frac{(\int f^+)^2}{2M}.
\end{align*}
The last inequality is justified by applying the property proved in \textit{Step 1)} to functions $f^+$ and $x\mapsto f^-(-x)$, which are both positive and, since $w=0$, supported on $[0, +\infty)$. Now, using that $\int f^+\dx=\int f^-\dx=\frac 12\int |f|\dx$, we arrive at $\int xf(x)\dx\geq \frac{(\int |f|)^2}{4M}$, therefore proving the lemma.\end{proof}

\begin{defi}
    Given any set $S\subset \mathbb{R}^n$ and $x\in\mathbb{R}$, we define $$S_x=\{y\in\mathbb{R}^{n-1}:(x,y)\in S\}.$$
\end{defi}

We will, at this point, fix $K$ and $H$, but we want to restrict ourselves to the case where $K$ and $H$ have a particular shape that will considerably simplify our reasoning. More precisely, we will suppose that
\begin{enumerate}
    \item [(H1)] the centroid of $K$ is located at the origin;
    \item [(H2)] $H=\{0\}\times \mathbb{R}^{n-1}$;
    \item [(H3)] $\max_{x\in\mathbb{R}}|K_x|=1$; and
    \item [(H4)] $|K|=1$.    
\end{enumerate}
Given any compact, convex body $\tilde K$ of positive Lebesgue measure and any hyperplane $\tilde H$ cutting through the centroid of $\tilde K$, there exists an affine bijection $f$ such that $f(\tilde K)$ and $f(\tilde H)$ satisfy the above hypotheses. To see why, simply observe that (H1) is obtained by translation, then (H2) by any linear transformation sending $\tilde H$ to $\{0\}\times\mathbb{R}^{n-1}$, then (H3) by a suitable rescaling and finally (H4) by stretching or compressing the space along the first axis.

We claim that the above assumptions can be made without loss of generality, that is, that the inequality in theorem \ref{maintheo} holds for the pair $(\tilde K,\tilde H^+)$ if and only if it holds for the pair $(f(\tilde K), f(\tilde H^+))$. This is simply due to the fact that
$$\frac{|f(\tilde K)\cap f(\tilde H^+)|}{|f(\tilde K)|}=\frac{|\det(f)||\tilde K\cap \tilde H^+|}{|\det (f)||\tilde K|}=\frac{|\tilde K\cap \tilde H^+|}{|\tilde K|}$$
and that
$$A(f(\tilde K))=\inf_{C \text{ cone}}\frac{|f(\tilde K)\Delta C|}{|f(\tilde K)|}=\inf_{C \text{ cone}}\frac{|f(\tilde K)\Delta f(C)|}{|f(\tilde K)|}=\inf_{C \text{ cone}}\frac{|\tilde K\Delta C|}{|\tilde K|}=A(\tilde K)$$
because $f$ induces a bijection among all the cones contained in $\mathbb{R}^n$. We thus fix a pair $(K,H)$ that meets the hypotheses (H1) through (H4) and will show theorem \ref{maintheo} for this special case, effectively proving it for the general case as well.

\begin{defi}\label{defi_n_geq_2}
    Let $a=\min \{x_1:(x_1,\ldots,x_n)\in K\}$ and $b=\max \{x_1:(x_1,\ldots,x_n)\in K\}$. We define the function $g:[a,b]\to \mathbb{R}$ as $$g(x)=|K_x|^\frac 1{n-1}.$$
\end{defi}
\begin{nota}
    $g$ is concave (in particular, continuous); to see why, observe that $K_{\lambda x + (1-\lambda)y}\supset \lambda K_x+(1-\lambda)K_y$ which entails
    
    \begin{align*}
        g(\lambda x + (1-\lambda)y)&=|K_{\lambda x + (1-\lambda)y}|^{\frac 1 {n-1}}\\
        &\geq |\lambda K_x+(1-\lambda)K_y|^{\frac 1 {n-1}}\\
        &\geq \lambda |K_x|^{\frac 1{n-1}}+(1-\lambda) |K_y|^{\frac 1{n-1}}\\
        &=\lambda g(x)+(1-\lambda)g(y)
    \end{align*}
    where we have used Brunn-Minkowski's inequality in the form given in \eqref{grun}.
\end{nota}

\begin{lema}\label{abbounds} The following inequalities hold.
\begin{equation*}
    \begin{cases}
        b-a\leq n\\
        -n<a<-\frac13<\frac13 < b < n
    \end{cases}
\end{equation*}
\end{lema}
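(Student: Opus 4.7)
The plan is to prove the four inequalities in the order $b-a\leq n$, the inner bounds involving $\pm\tfrac13$, and finally the outer bounds $\pm n$, since the last pair follows from combining the first two. The only nontrivial step is the first, which will exploit the concavity of $g$ noted after Definition \ref{defi_n_geq_2}.

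First, I would establish $b-a\leq n$ via a concavity argument. By (H3) there is $x_0\in[a,b]$ with $g(x_0)=1$. Since $g\geq 0$ (and in particular $g(a),g(b)\geq 0$), concavity of $g$ on $[a,x_0]$ and on $[x_0,b]$ gives
\begin{equation*}
    g(x)\geq\frac{x-a}{x_0-a}\text{ on }[a,x_0],\qquad g(x)\geq\frac{b-x}{b-x_0}\text{ on }[x_0,b].
\end{equation*}
Raising to the $(n-1)$-th power, integrating, and using (H4),
\begin{equation*}
    1=|K|=\int_a^b g(x)^{n-1}\dx\geq\frac{x_0-a}{n}+\frac{b-x_0}{n}=\frac{b-a}{n}.
\end{equation*}

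Next, for the bounds involving $\pm\tfrac13$, I would apply Grünbaum's inequality \eqref{grunbaum}: by (H1) and (H2), $H$ passes through the centroid of $K$, so $|K\cap H^+|\geq q_n$ (using (H4)). Combining with $|K_x|\leq 1$ from (H3),
\begin{equation*}
    q_n\leq|K\cap H^+|=\int_0^b|K_x|\dx\leq b,
\end{equation*}
and symmetrically $-a\geq q_n$. The elementary estimate $(1+\tfrac1n)^n<e<3$ then yields $q_n=1/(1+\tfrac1n)^n>1/e>\tfrac13$, whence $a\leq -q_n<-\tfrac13<\tfrac13<q_n\leq b$.

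Finally, since $a<0<b$ from the previous step, we have $-a<b-a\leq n$ and $b<b-a\leq n$, giving $-n<a$ and $b<n$. The main obstacle is the concavity argument of the first step: one cannot assume $g(a)=g(b)=0$ in general (consider, e.g., a cylinder), so the secant lower bounds must rely only on the weaker fact $g(a),g(b)\geq 0$; concavity fortunately still delivers the desired inequality.
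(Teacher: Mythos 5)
Your proof is correct and takes essentially the same approach as the paper's: for $b-a\leq n$ the paper inscribes two cones with apex in $\{a\}\times K_a$ and $\{b\}\times K_b$ and common base $\{x_1\}\times K_{x_1}$ inside $K$, which is the geometric translation of your concavity bound $g(x)\geq \frac{x-a}{x_0-a}$ and $g(x)\geq \frac{b-x}{b-x_0}$; the remaining inequalities are handled identically (your added detail that $q_n>1/e>\tfrac13$ is a justification the paper simply asserts).
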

\begin{proof}
    By hypothesis (H3), we may find $x_1\in[a,b]$ such that $g(x_1)=1$. Letting $S_a\in\{a\}\times K_a$ and $S_b\in\{b\}\times K_b$ be arbitrary, the cones $\mathrm{Conv}(S_a,\{x_1\}\times K_{x_1})$ and $\mathrm{Conv}(S_b,\{x_1\}\times K_{x_1})$ are contained in $K$ (where $\mathrm{Conv}(\cdot,\cdot)$ denotes the convex hull). Using that $|K_{x_1}|=g(x_1)^{n-1}=1$, the volume of the union of these two cones is clearly
    $$|K_{x_1}|\frac{b-x_1}{n}+|K_{x_1}|\frac{x_1-a}{n}=\frac{b-a}{n}.$$
    This quantity being less than $|K|=1$, we have shown that $b-a\leq n$.
    
    Moreover, $b=\int_0^b\dx\geq \int_0^bg(x)^{n-1}\dx=|K\cap H^+|\geq q_n>\frac 13$ and similarly $a<-\frac13$. Finally $b\leq n+a<n$ and $a\geq -n+b>-n$.
\end{proof}
\begin{lema}\label{k0bound}
    The inequality $$\frac 1 {3n}\leq |K_0|\leq 1$$ holds true.
\end{lema}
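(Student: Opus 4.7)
The plan is to handle the two bounds separately. The upper bound $|K_0|\le 1$ is immediate from hypothesis (H3), since $|K_0|\le\max_x|K_x|=1$, so the entire content of the lemma lies in the lower bound.

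For the lower bound, the idea is to combine the concavity of $g$ with Grünbaum's inequality \eqref{grunbaum} applied not to $H^+$ but to the half-space opposite to the one containing the maximum cross-section. Pick any $x_1\in [a,b]$ with $g(x_1)=1$ (such a point exists by (H3)); concavity plus maximality force $g$ to be non-decreasing on $[a,x_1]$ and non-increasing on $[x_1,b]$. I would split into two symmetric cases by the sign of $x_1$. Assuming $x_1\ge 0$, monotonicity on $[a,0]\subseteq [a,x_1]$ gives $g(x)\le g(0)$ for every $x\in [a,0]$, whence
$$|K\cap H^-|=\int_a^0 g(x)^{n-1}\dx\le (-a)\cdot g(0)^{n-1}=(-a)\,|K_0|\le n\,|K_0|,$$
where the final inequality is Lemma \ref{abbounds}. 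Applying Grünbaum's inequality \eqref{grunbaum} to the half-space $H^-$ (its boundary $H$ also passes through the centroid of $K$, and $|K|=1$) yields $|K\cap H^-|\ge q_n$, hence $|K_0|\ge q_n/n$. The case $x_1\le 0$ is symmetric: one uses $H^+$, the non-increasing behaviour of $g$ on $[0,b]\subseteq [x_1,b]$, and the bound $b\le n$ from Lemma \ref{abbounds}.

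To finish, I would check that $q_n>1/3$ for all $n\ge 2$: since $q_n=(n/(n+1))^n=(1+1/n)^{-n}$ is decreasing in $n$ with limit $1/e>1/3$, indeed $q_n\ge 1/e>1/3$ throughout the relevant range, so $|K_0|\ge q_n/n>1/(3n)$. No step is a real obstacle here; the only conceptual ingredient is to notice that monotonicity of $g$ on the side of $H$ opposite to $x_1$ makes $K_0$ the largest cross-section on that side, which converts Grünbaum's lower bound on the half-volume into the desired lower bound on $|K_0|$.
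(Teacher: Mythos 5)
Your proof is correct and takes essentially the same approach as the paper: the upper bound from (H3), and for the lower bound the case split by which side of $0$ the cross-sectional maximum falls on, followed by the bound $|K\cap H^{\pm}|\leq n\,|K_0|\leq n\,|K_0|$ combined with Grünbaum applied to the appropriate half-space and the numerical fact $q_n>1/3$. The only cosmetic difference is that the paper phrases the dichotomy directly as a property of concave functions ($g\leq g(0)$ on one of $[a,0]$, $[0,b]$), whereas you derive it from unimodality around the point $x_1$ where $g$ attains its maximum.
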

\begin{proof}
We start by noticing that $|K_0|=g(0)^{n-1}\leq 1$. Moreover, by the properties of concave functions, $g(x)\leq g(0)$ holds either for all $x\in[0,b]$ or for all $x\in[a,0]$. In the first case, we have $$q_n\leq \int _0^bg(x)^{n-1}\dx\leq bg(0)^{n-1}\leq n|K_0|\implies |K_0|\geq q_n/n$$and therefore $|K_0|\geq \frac 1 {3n}$. The same inequality follows from the second case analogously.
\end{proof}

\begin{nota}
    Lemmas \ref{abbounds} and \ref{k0bound} are crucial to our reasoning because they provide nontrivial bounds for $|K_0|$, $|a|$ and $|b|$. The existence of such general bounds is essentially guaranteed by hypothesis (H3). To be more precise, assumptions (H1), (H2) and (H4) are not sufficient to derive bounds on $|a|$, $|b|$ and $|K_0|$. Indeed, it may happen that $|K_0|$ is arbitrarily small while $|a|$ and $|b|$ are arbitrarily large; just as $|K_0|$ may be arbitrarily large while $|a|$ and $|b|$ become be arbitrarily small.

\end{nota}

\begin{defi}
    With $b'=\frac{n}{|K_0|}|K\cap H^+|>0$ and $a'=b'-\left(n(b')^{n-1}/|K_0|\right)^{1/n}$, define $c:[a',b']\to\R$ by
        $$c(x)=g(0)\frac{b'-x}{b'}.$$
\end{defi}

\begin{prop}\label{propdefi2}
    The following properties hold.
    \begin{enumerate}
    \item $a'< 0$
        \item $c(0)=g(0)$
        \item $\int_{a'}^0c(x)^{n-1}\dx=|K\cap H^-|$
        \item $\int_0^{b'}c(x)^{n-1}\dx=|K\cap H^+|$
        \item $\int_{a'}^{b'} c^{n-1}=1$
    \end{enumerate}
\end{prop}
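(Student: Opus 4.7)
The proof is a sequence of direct calculations that essentially amount to verifying that the definitions of $a'$, $b'$, and $c$ have been engineered to make (1)--(5) hold. My plan is to first dispatch (2), which is immediate from substituting $x = 0$ into the formula for $c$, then carry out the two elementary integrations for (4) and (5), deduce (3) by subtraction using (H4), and finally obtain (1) as a consequence of Grünbaum's inequality applied to $H^-$.

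For (4), I substitute the definition of $c$ and integrate $c^{n-1}$ as a pure power of $(b'-x)/b'$, obtaining $\int_0^{b'} c(x)^{n-1}\,dx = g(0)^{n-1} b'/n$. Since $g(0)^{n-1} = |K_0|$ by the definition of $g$, this equals $|K_0| b'/n$, which is exactly $|K \cap H^+|$ by the definition of $b'$. For (5), the same antiderivative evaluated from $a'$ to $b'$ yields $g(0)^{n-1}(b'-a')^n/(n(b')^{n-1})$; here the choice of $a'$ was made precisely so that $(b'-a')^n = n(b')^{n-1}/|K_0|$, and after this substitution, combined with $g(0)^{n-1} = |K_0|$, everything telescopes to $1$. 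Property (3) then follows at once from (4), (5), and (H4), since $\int_{a'}^0 c^{n-1} = \int_{a'}^{b'} c^{n-1} - \int_0^{b'} c^{n-1} = 1 - |K \cap H^+| = |K\cap H^-|$.

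For (1), I rearrange the defining equation for $a'$ to see that $a' < 0$ is equivalent to $(b')^n < n(b')^{n-1}/|K_0|$, hence to $b' < n/|K_0|$. Translating via the definition of $b'$, this amounts to showing $|K \cap H^+| < 1$, which in turn is ensured by applying Grünbaum's inequality \eqref{grunbaum} to the complementary half-space $H^-$: we get $|K \cap H^-|/|K| \geq q_n > 0$, so $|K \cap H^+| \leq 1 - q_n < 1$.

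There is no genuine obstacle in this proposition; it is essentially bookkeeping, with the only mild subtlety being that (1) requires a strict inequality, so one must actually invoke Grünbaum in $H^-$ rather than merely use the tautological bound $|K \cap H^+| \leq |K|$.
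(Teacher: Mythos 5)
Your proof is correct and follows essentially the same route as the paper's: direct computation of the two integrals, subtraction for (3), and the rearrangement $a'<0 \iff b'<n/|K_0| \iff |K\cap H^+|<1$ for (1). The one difference is that you explicitly justify the strict inequality $|K\cap H^+|<1$ via Grünbaum applied to $H^-$, whereas the paper takes this for granted; this is a small but genuine improvement in rigor.
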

\begin{proof}
     We prove each property in turn.\\
     (1) Observe that $$b'=\frac{n}{|K_0|}|K\cap H^+|<n/|K_0|$$ whence
     \begin{align*}
     (b')^n&< n(b')^{n-1}/|K_0|\\
     b'&<(n(b')^{n-1}/|K_0|)^{1/n}
     \end{align*} and therefore $a'< 0$.\\
     (2) This is immediate from the definition of function $c$.\\
     (4) We have $$\int_0^{b'}c(x)^{n-1}\dx=\frac{(g(0))^{n-1}}{(b')^{n-1}}\frac{(b')^n}{n}=\frac{|K_0|b'}{n}=|K\cap H^+|.$$
     (5) Observe that $$\int_{a'}^{b'} c^{n-1}=\frac{(g(0))^{n-1}}{(b')^{n-1}}\frac{(b'-a')^{n}}{n}=\frac{|K_0|}{(b')^{n-1}}\frac{n(b')^{n-1}/|K_0|}{n}=1.$$
     (3) This follows from $(4)$ and $(5)$:
     $$\int_{a'}^0c(x)^{n-1}\dx=\int_{a'}^{b'}c(x)^{n-1}\dx-\int_0^{b'}c(x)^{n-1}\dx= 1-|K\cap H^+|=|K\cap H^-|.$$
\end{proof}

\begin{nota}
    To better understand the role of function $c$, consider $C_K$ a cone such that $|C_K\cap H^+|=|K\cap H^+|$, $|C_K\cap H^-|=|K\cap H^-|$ and $(C_K)_0=K_0$. Then $C_K$ is unique up to shear along $H$, that is, up to linear transformation fixing $H$ and of determinant $1$. Moreover, $c^{n-1}$ gives the area of each cross section of $C_K$ that is parallel to $H$. Figure \ref{figure} shows the graphs of $g$ and $c$ as they would appear in a general setting. One could hope to find a $C_K$ such that $|K\Delta C_K|=\int_{\mathbb{R}} |c^{n-1}-g^{n-1}|\dx$, which is small, as shown in proposition \ref{bprimebbound}. However, we do not see any reason why such a $C_K$ would exist so, instead, in section 3 we show that a cone $C$ can be found whose cross-sectional area is close to that of $C_K$ while $|K\Delta C|$ is easy to calculate.
\end{nota}

\begin{prop}\label{aborder}
    $a\leq a'$ and $b\leq b'$. Moreover, $g(x)\leq c(x)$ for all $x\in [a',0]$ and there exists $v\in (0,b]$ such that $g(x)\geq c(x)$ for every $x\in [0,v]$ and $g(x)\leq c(x)$ for all $x\in [v,b]$.
\end{prop}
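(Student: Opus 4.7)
My plan is to treat the four assertions in sequence. First, the bound $b\leq b'$ comes from the chord lower bound that concavity of $g$ on $[0,b]$ provides, together with $g(b)=0$: since $g(x)\geq g(0)(1-x/b)$ on $[0,b]$, raising to the $(n-1)$-th power and integrating gives $|K\cap H^+|\geq |K_0|b/n$, whence $b\leq n|K\cap H^+|/|K_0|=b'$.

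Second, for the crossing point $v$, I set $h:=g-c$, which is concave on $[a,b]$ (concave plus linear, after extending $c$ to all of $\R$ via its defining formula) with $h(0)=0$ and $h(b)=-c(b)\leq 0$. By concavity of $h$, the superlevel set $\{h\geq 0\}$ is an interval; since it contains $0$, it meets $[0,b]$ in some $[0,v]$. A quick contradiction with the integral identity $\int_0^b g^{n-1}=\int_0^{b'}c^{n-1}$ (together with $b\leq b'$) rules out $v=0$, so $v\in(0,b]$ and $g\geq c$ on $[0,v]$, $g\leq c$ on $[v,b]$.

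Third, to obtain $g\leq c$ on $[a,0]$, I use that the non-negativity of $h$ on $[0,v]$ with $v>0$ forces $h'(0^+)\geq 0$. Concavity of $h$ then gives $h'(0^-)\geq h'(0^+)\geq 0$, and the same concavity propagates $h'\geq 0$ throughout $[a,0)$ (the derivative of a concave function is non-increasing); hence $h$ is non-decreasing on $[a,0]$ with maximum $h(0)=0$, i.e., $g\leq c$ on $[a,0]$. Finally, if $a>a'$ held then $c>0$ on $[a',0]$ would yield
$$\int_a^0 c^{n-1}<\int_{a'}^0 c^{n-1}=|K\cap H^-|=\int_a^0 g^{n-1}\leq \int_a^0 c^{n-1},$$
a contradiction; so $a\leq a'$, and $g\leq c$ on $[a',0]\subset[a,0]$.

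The main obstacle is the cross-origin step: the chord lower bound alone fails to give $a\leq a'$ directly (it would only yield $|a|\leq n|K\cap H^-|/|K_0|$, generally weaker than $|a|\leq|a'|$). The key point is that the positive-side crossing at $0$ encodes, through the full concavity of $g$, enough information to push $g\leq c$ all the way across the origin to the left endpoint, after which $a\leq a'$ falls out from the simple integral comparison above.
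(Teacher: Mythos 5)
Your proof is correct and takes a genuinely different route at two points. For $b\leq b'$, you integrate the chord bound $g(x)\geq g(0)(1-x/b)$ directly, whereas the paper argues by contradiction from $b'<b$; yours is more direct. (A minor caveat: you twice invoke ``$g(b)=0$'', but $K$ may have a flat face at $x=b$, so $g(b)>0$ is possible; fortunately the chord bound only needs concavity together with $g(b)\geq 0$, and your assertion $h(b)\leq 0$ is never actually used, since the superlevel set of the concave $h$ restricted to $[0,b]$ is an interval containing $0$ regardless of the sign of $h(b)$.) More substantively, for the cross-origin step you exploit the monotonicity of the one-sided derivative of the concave function $h=g-c$: from $h(0)=0$ and $h\geq 0$ on $[0,v]$ with $v>0$ you get $h'(0^+)\geq 0$, hence $h'(0^-)\geq h'(0^+)\geq 0$, and since $h'$ is non-increasing this propagates $h'\geq 0$ on $[a,0)$, so $h$ is non-decreasing and $h\leq h(0)=0$ there. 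The paper instead picks an explicit $\beta\in(0,b]$ with $g(\beta)\geq c(\beta)$ and applies the three-point concavity inequality $g(0)\geq\frac{\beta}{\beta-z}g(z)-\frac{z}{\beta-z}g(\beta)$ to contradict any $z\in[a,0]$ with $g(z)>c(z)$. Both are honest uses of concavity, and your slope-monotonicity version avoids the convex-combination bookkeeping and is arguably slicker; it also dispenses with the paper's somewhat artificial framing ``suppose $a'<a$'' around an argument that in fact never uses that assumption. The concluding integral comparison that yields $a\leq a'$ matches the paper's.
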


\begin{proof} If instead it were $b'<b$, then by the concavity of $g$ we would have that $g(x)>c(x)$ for all $0<x\leq b'$ implying that $\int_0^{b'}c(x)^{n-1}\dx<\int_0^{b}g(x)^{n-1}\dx=|K\cap H^+|$, a contradiction. Thus $b\leq b'$. Moreover, there must exist some $\beta\in (0,b]$ such that $g(\beta)\geq c(\beta)$; otherwise, condition $(4)$ in proposition \ref{propdefi2} would not hold, since we would have that
$$\int_0^{b'}c^{n-1}\geq \int_0^{b}c^{n-1}>\int_0^{b}g^{n-1}=|K\cap H^+|.
$$Moreover, the set $\{x\in[0,b]:g(x)\geq c(x)\}$ is closed, convex and contains $0$ and $\beta$, hence of the form $[0,v]$ for some $v\in [\beta,b]$.

Suppose now $a'<a$. The concavity of $g$ implies $g(x)\leq c(x)$ for all $a\leq x\leq 0$. To see why, suppose $g(z)>c(z)$ for some $z\in[a,0]$; then we would have \begin{align*}
    g(0)=g\left(z\frac{\beta}{\beta-z}+\beta\frac{-z}{\beta-z}\right)\geq \frac{\beta}{\beta-z}g(z)  -\frac{z}{\beta-z}g(\beta)&>\frac{\beta}{\beta-z}c(z)  -\frac{z}{\beta-z}c(\beta)\\&=c(0)=g(0)
\end{align*} which is a contradiction. Hence $\int_{a'}^0c(x)^{n-1}\dx>\int_{a}^0c(x)^{n-1}\dx\geq \int_{a}^0g(x)^{n-1}\dx=|K\cap H^-|$, a contradiction.

Thus $a\leq a'$. An analogous argument shows that $c(x)\geq g(x)$ for all $x\in [a',0]$.

\end{proof}

Moreover lemma \ref{k0bound} implies that $b'/(3n^2)\leq b'|K_0|/n=|K\cap H^+|\leq 1$, hence
\begin{equation}\label{bprimebound}
    b'\leq 3n^2.
\end{equation}

\begin{figure}
    \centering
    \begin{tikzpicture}[scale=1.7]

      % Colors
      \definecolor{mygreen}{RGB}{0,200,40}
      \definecolor{myblue}{RGB}{20,50,200}
      \definecolor{mygray}{RGB}{0,0,0}

      % Axes (no arrows)
      \draw[line width=0.8pt] (-3,0) -- (4,0); % x-axis
      \draw[line width=0.8pt] (0,-0.6) -- (0,3); % y-axis

      % Values of a, a', b, b'
      \def\a{-2.1}
      \def\ap{-1.3}
      \def\b{1.66}
      \def\bp{3.1}

      % Function definitions
      \def\g#1{-0.32*#1*#1 + 0.17*#1 + 2}
      \def\c#1{-2/\bp*#1 + 2}

      % Vertical ticks and grey labels at x-axis
      \draw[black, line width=1pt] (\a,0.07) -- (\a,-0.07);
      \node[mygray, below] at (\a,-0.12) {$a$};
      \foreach \x/\name in {\ap/$a'$, \b/$b$, \bp/$b'$} {
        \draw[black, line width=1pt] (\x,0.07) -- (\x,-0.07);
        \node[mygray, below] at (\x,-0.05) {\name};
      }
      
      \draw[dotted, line width=1pt] (\ap,{\c{\ap}}) -- (\ap,0);
      \draw[dotted, line width=1pt] (\a,{\g{\a}}) -- (\a,0);
      \draw[dotted, line width=1pt] (\b,{\g{\b}}) -- (\b,0);

      % Draw function g
      \draw[myblue, line width=1.4pt, domain=\a:\b, samples=200] plot (\x, {\g{\x}});
      \node[myblue] at (-1.5,1.3) {$g$};

      % Draw function c
      \draw[mygreen, line width=1.4pt, domain=\ap:\bp, samples=2] plot (\x, {\c{\x}});
      \node[mygreen] at (2,0.91) {$c$};

      % Thick points at discontinuities (now at actual values)
      \fill[myblue] (\a, {\g{\a}}) circle (1.2pt);
      \fill[myblue] (\b, {\g{\b}}) circle (1.2pt);
      \fill[mygreen] (\ap, {\c{\ap}}) circle (1.2pt);
      \fill[mygreen] (\bp, {\c{\bp}}) circle (1.2pt);

    \end{tikzpicture}
    \caption{The graphs of functions $g$ and $c$ in a general setting. Note that $g(a)$ and $g(b)$ may be zero; otherwise, it means that $K$ has ``flat faces''.}
    \label{figure}
\end{figure}

\begin{nota}
    From this point on, we will extend functions $g$ and $c$ by $0$ outside their original domains.
\end{nota}

\section{Proof of the main theorem}

The purpose of this last section is to find a cone $C$ such that $|K\Delta C|$ both small and easy to calculate. Groemer found one such cone to lie \textit{inside} $K$, which made the computation of $|K\Delta C|$ trivial. We will exploit the convex nature of $K$ to find a cone that neither contains nor is contained by $K$ but nevertheless suits the above requirements.

The first thing we would like to do is to prove that $b'$ is close to $b$ provided that $|K\cap H^+|$ is close to $q_n$. We make the following claim.

\begin{prop}\label{bprimebbound}
Let $d=|K\cap H^+|^{1/n}-q_n^{1/n}$. Then $b'-b\leq 288n^2 d^{1/(2n)}$.
\end{prop}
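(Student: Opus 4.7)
My plan is to write $b'-b$ as an $n$-th root of an integral and then shrink that integral with the second statement of Lemma \ref{intxf}. Since $c$ is affine with $c(b')=0$, a direct substitution gives $\int_b^{b'}c^{n-1}\dx=|K_0|(b'-b)^n/(n(b')^{n-1})$, which combined with $(b'-a')^n=n(b')^{n-1}/|K_0|$ rearranges to
$$(b'-b)^n=(b'-a')^n\int_b^{b'}c^{n-1}\dx.$$
Using that $g\equiv0$ on $[b,b']$, that $\int_0^{b'}(c^{n-1}-g^{n-1})\dx=0$, and that Proposition \ref{aborder} controls the sign of $c^{n-1}-g^{n-1}$ on $[0,b']$, a short manipulation bounds $\int_b^{b'}c^{n-1}\leq\tfrac12\Delta$, where $\Delta:=\int_0^{b'}|c^{n-1}-g^{n-1}|\dx$. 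The task is thus reduced to estimating $\Delta$.

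The bound on $\Delta$ comes from two applications of Lemma \ref{intxf} to $f=c^{n-1}-g^{n-1}$. On $[0,b']$: Proposition \ref{aborder} yields $f\leq 0$ on $[0,v]$ and $f\geq 0$ on $[v,b']$; hypothesis (H3) gives $|f|\leq 1$; and $\int_0^{b'}f=0$. The second statement of the lemma produces $\Delta\leq 2\sqrt{\int_0^{b'}xf\dx}$. On $[a,0]$: $f\leq 0$ on $[a,a')$ (since $c=0\leq g$ there) and $f\geq 0$ on $[a',0]$ (again by Proposition \ref{aborder}), with $\int_a^0 f=0$ because $\int_{a'}^0 c^{n-1}=|K\cap H^-|=\int_a^0 g^{n-1}$. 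The first statement of the lemma gives $\int_a^0 xf\dx\geq 0$, and combining this with the centroid condition $\int_a^0 xg^{n-1}=-\int_0^b xg^{n-1}$ from (H1) rearranges into
$$\int_0^{b'}xf\dx\leq\int_{a'}^{b'}xc^{n-1}\dx.$$

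The final ingredient identifies this last quantity with $d(b'-a')$. The defining relations $|K\cap H^+|=|K_0|b'/n$ and $(b'-a')^n=n(b')^{n-1}/|K_0|$ combine into $|K\cap H^+|^{1/n}=b'/(b'-a')$, whence $d=(b'+na')/((n+1)(b'-a'))$; and a direct integration shows $\int_{a'}^{b'}xc^{n-1}=(b'+na')/(n+1)$, so $\int_{a'}^{b'}xc^{n-1}=d(b'-a')$. Chaining all the inequalities,
$$(b'-b)^n\leq(b'-a')^n\sqrt{d(b'-a')}=(b'-a')^{n+1/2}\sqrt{d},$$
and using $b'\leq 3n^2$ (from \eqref{bprimebound}), $|K_0|\geq 1/(3n)$ (Lemma \ref{k0bound}), and the resulting $b'-a'\leq 3n^2$, taking $n$-th roots gives the announced estimate up to an absolute multiplicative constant. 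The main obstacle is recognizing the identity $\int_{a'}^{b'}xc^{n-1}=d(b'-a')$: it says the first moment of the reference cone $c^{n-1}$ is exactly the Grünbaum deficit $d$ times the cone length, and it is precisely this that converts the $\sqrt{\cdot}$-type bound from Lemma \ref{intxf} into the exponent $1/(2n)$ for $b'-b$.
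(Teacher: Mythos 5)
Your proof is correct, and it follows the same essential strategy as the paper's: split $h := c^{n-1}-g^{n-1}$ at $x=0$, apply Lemma \ref{intxf}, and use the identity $\int xc^{n-1}\dx = d(b'-a')$ to convert a first-moment bound into a bound on $b'-b$ with exponent $1/(2n)$. The differences are in the bookkeeping, and they are worth noting because they streamline the argument and improve the constant. First, you replace the paper's inequality $\int|h|\geq \frac13\bigl(\frac{b'-b}{b'}\bigr)^n$ with the exact algebraic identity $(b'-b)^n=(b'-a')^n\int_b^{b'}c^{n-1}$, which avoids invoking $|K\cap H^+|\geq\frac13$. Second, you only need $\int_0^{b'}|h|$ rather than all of $\int|h|$, which lets you use the tighter $L^\infty$ bound $M=1$ on $[0,b']$ (where both $c^{n-1}$ and $g^{n-1}$ are $\leq 1$) instead of the paper's $M=3n+1$ on all of $\mathbb{R}$. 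Third, you apply only the qualitative first part of Lemma \ref{intxf} on $[a,0]$ — namely $\int_a^0 xh\dx\geq 0$ — to transfer the moment, whereas the paper applies the quantitative second part on both halves; your version discards no information in the step $\int_0^{b'}xh\dx\leq (b'-a')d$, whereas the paper uses the looser $\int xh_i\dx\leq 4n^2d$ on each half. The net effect is a constant of order $n^2$ rather than $288n^2$, though the $d^{1/(2n)}$ dependence is unchanged. Your derivation of $b'-a'\leq 3n^2$ from $(b'-a')^n=n(b')^{n-1}/|K_0|$ together with $b'\leq 3n^2$ and $|K_0|\geq 1/(3n)$ is correct, and the identity $\int_{a'}^{b'}xc^{n-1}\dx=(b'+na')/(n+1)=d(b'-a')$ checks out.
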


\begin{proof} 
Let $h=c^{n-1}-g^{n-1}$. The key is to observe that
\begin{equation}\label{remember}
    \int |h|\geq \int_b^{b'}|h|=\int_{b}^{b'}c(x)^{n-1}\dx=|K\cap H^+|\left(\frac{b'-b}{b'}\right)^n\geq \frac 13\left(\frac{b'-b}{b'}\right)^n.
\end{equation}
We would like to give a bound on $\int |h|$, so we start by observing that
$$\int xc(x)^{n-1}\dx=(b'-a')d.$$
To prove this fact, notice first that $$
\frac{b'}{b'-a'}=\frac{b'}{(n(b')^{n-1}/|K_0|)^{1/n}}=\left(\frac{b'|K_0|}{n}\right)^{1/n}=|K\cap H^+|^{1/n}.$$ Since the centroid of an $n$-dimensional cone of height $t$ is located in the hyperplane that lies $t n/(n+1)$ away from its vertex and $t/(n+1)$ from its base, we get
$$\int xc(x)^{n-1}\dx=a'+(b'-a')\frac{1}{n+1}=(b'-a')\left(\frac{b'}{b'-a'}-\frac{n}{n+1}\right)=(b'-a')d.$$

Therefore, by lemma \ref{abbounds} and equation \eqref{bprimebound}, $$\int xc(x)^{n-1}\dx= (b'-a')d\leq (3n^2+n)d\leq 4n^2d.$$

Note that, by the definition of $h$ and since $\int xg(x)^{n-1}\dx=0$, we have
\begin{equation}\label{intxhbound}
    \int xh(x)\dx\leq 4n^2d.
\end{equation}
We now write $h=h_1+h_2$ where $h_1=h\chi_{(-\infty,0)}$ and $h_2=h\chi_{[0,+\infty)}$. Proposition \ref{propdefi2} readily implies that $\int h_1=\int h_2=0$.

According to proposition \ref{aborder}, we can apply lemma \ref{intxf} to $h_1$ as well as to $h_2$, thus, by equation \eqref{intxhbound},
\begin{equation}\label{intxhibound}
    0\leq \int xh_i(x)\dx\leq 4n^2d,\quad i=1,2.
\end{equation}
Now, we would like to check that $|h|$ is bounded by some constant that does not depend on $K$. We know that $g^{n-1}\leq 1$ and that $c^{n-1}\leq c^{n-1}(a')$. But $$c^{n-1}(a')(b'-a')/n=1\implies c^{n-1}(a')=\frac{n}{b'-a'}\leq \frac{n}{1/3},$$ because of lemma \ref{abbounds}. Hence, $c^{n-1}\leq 3n$ and thus
\begin{equation}\label{abshbound}
    |h_i|\leq |h|\leq g^{n-1}+c^{n-1}\leq 3n+1,\quad i=1,2.
\end{equation}
Thus, as a consequence of \eqref{intxhibound}, \eqref{abshbound} and lemma \ref{intxf},
$$\int |h_i|\leq 2\sqrt{(3n+1)\cdot 4n^2d},\quad i=1,2$$
which implies that
\begin{equation}\label{inthbound}
    \int |h| = \int |h_1|+\int |h_2|\leq 4\sqrt{(3n+1)4n^2d}\leq 16n^{3/2}\sqrt{d}.
\end{equation}
On the other hand, by \eqref{remember},
$$\int |h|\geq \frac 13\left(\frac{b'-b}{b'}\right)^n,$$
hence, by \eqref{inthbound}, $\frac 13\left(\frac{b'-b}{b'}\right)^n\leq 16n^{3/2}\sqrt{d}$ and, since $b'\leq 3n^2$, we obtain that
$$b'-b\leq 3n^2(48n^{3/2}\sqrt{d})^{1/n}\leq 288n^2 d^{1/(2n)}$$
since $(n^{3/2})^{1/n}\leq 2$.\end{proof}

\begin{defi}
    Let $\beta\in K_b$ be arbitrary and $S=(b,\beta)\in K$. Let
    
    $$C=\text{Conv}\Bigg[\{a'\}\times \left(\beta + \frac{b-a'}{b}(K_0-\beta)\right), \,S\Bigg].$$
    For $x\in \R$, let $$C_x=\{y\in \mathbb{R}^{n-1}:(x,y)\in C\}$$ and $s(x)=|C_x|^\frac 1{n-1}$.
\end{defi}
Adding the graph of function $s$ to figure \ref{figure} results in figure \ref{figure2}.
\begin{prop}
    For $x\in [a', b]$, we have that$$C_x = \beta + \frac{b-x}{b}(K_0-\beta).$$
    In particular, $C_0=K_0$ and $s(x)=g(0)\frac{b-x}{b}$.
\end{prop}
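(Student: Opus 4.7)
The proposal is to use the standard linear interpolation formula for cross-sections of a cone between its apex and its base, then substitute the explicit description of the base to recover the stated formula.

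First I would make the cone's structure completely explicit. By definition, $C$ is the convex hull of the apex $S=(b,\beta)$ and the base $\{a'\}\times B$, where $B=\beta+\frac{b-a'}{b}(K_0-\beta)$. A generic point of $C$ is then $(1-t)(a',b_0)+t(b,\beta)$ with $b_0\in B$ and $t\in[0,1]$. Fixing an $x$-coordinate $x\in[a',b]$ forces $(1-t)a'+tb=x$, i.e.\ $1-t=\frac{b-x}{b-a'}$ and $t=\frac{x-a'}{b-a'}$, and the fiber $C_x$ becomes
$$C_x=(1-t)B+t\{\beta\}=\beta+(1-t)(B-\beta)=\beta+\frac{b-x}{b-a'}(B-\beta).$$
Substituting $B-\beta=\frac{b-a'}{b}(K_0-\beta)$ makes the factor $(b-a')$ cancel, yielding the claimed identity $C_x=\beta+\frac{b-x}{b}(K_0-\beta)$.

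From there the two remaining consequences are immediate. Setting $x=0$ gives $C_0=\beta+(K_0-\beta)=K_0$. For the cross-sectional $(n-1)$-volume, note that $C_x$ is a translate of $\frac{b-x}{b}(K_0-\beta)$, and since $x\leq b$ (and $b>0$) the scalar $\frac{b-x}{b}$ is nonnegative, so
$$|C_x|=\left(\frac{b-x}{b}\right)^{n-1}|K_0|=\left(\frac{b-x}{b}\right)^{n-1}g(0)^{n-1},$$
whence $s(x)=|C_x|^{1/(n-1)}=g(0)\frac{b-x}{b}$.

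The only subtle point worth checking carefully is that the interpolation is indeed valid on the entire interval $[a',b]$, and in particular that $b-a'>0$ so that the parametrization by $t$ is well-defined; this is guaranteed by $a'<0<b$ from Proposition \ref{propdefi2}(1) and Lemma \ref{abbounds}. Everything else is a routine substitution, so there is no significant obstacle in the argument.
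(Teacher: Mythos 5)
Your proposal is correct and takes essentially the same approach as the paper: parametrize the cone as convex combinations of the apex $S=(b,\beta)$ and points of the base $\{a'\}\times B$, solve for the parameter in terms of the $x$-coordinate, and simplify after substituting the explicit formula for $B$. The paper states this more tersely as a single line of computation, while you spell out the interpolation and the well-definedness ($b-a'>0$) more explicitly, but the argument is the same.
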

\begin{proof}
    We have $$C_x=\left(\beta + \frac{b-a'}{b}(K_0-\beta)\right)\frac {b-x}{b-a'}+\beta \frac {x-a'}{b-a'}=\beta + \frac{b-x}{b}(K_0-\beta).$$
\end{proof}
\begin{prop}
    $|K\Delta C|=\int |g^{n-1}-s^{n-1}|$.
\end{prop}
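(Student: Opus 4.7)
The plan is to compute $|K\Delta C|$ by slicing along the first axis. By Fubini,
$$|K\Delta C|=\int_{\R}|K_x\Delta C_x|\dx,$$
and since $|K_x|=g(x)^{n-1}$ and $|C_x|=s(x)^{n-1}$ (extending $g$ and $s$ by zero outside their natural domains), it suffices to show that for each $x\in\R$ one of the inclusions $K_x\subset C_x$ or $C_x\subset K_x$ holds. In that case $|K_x\Delta C_x|=\bigl||K_x|-|C_x|\bigr|=|g(x)^{n-1}-s(x)^{n-1}|$, and integrating closes the argument.

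I would split $[a',b]$ into $[0,b]$ and $[a',0]$. On $[0,b]$, the previous proposition rewrites $C_x=\tfrac{b-x}{b}K_0+\tfrac{x}{b}\{\beta\}$, so any $z\in C_x$ has the form $\tfrac{b-x}{b}w+\tfrac{x}{b}\beta$ with $w\in K_0$. Since both $(0,w)$ and $S=(b,\beta)$ lie in $K$, convexity of $K$ yields $(x,z)=\tfrac{b-x}{b}(0,w)+\tfrac{x}{b}S\in K$, so $z\in K_x$ and $C_x\subset K_x$.

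On $[a',0]$ the inclusion is reversed. Given $y\in K_x$ I would consider the convex combination $\lambda(x,y)+(1-\lambda)(b,\beta)$ with $\lambda=\tfrac{b}{b-x}\in(0,1]$: its first coordinate vanishes, and its second equals $w:=\beta+\tfrac{b}{b-x}(y-\beta)$. Convexity of $K$ together with $S\in K$ then gives $w\in K_0$, and a direct rearrangement shows that $w\in K_0$ is equivalent to $y\in\beta+\tfrac{b-x}{b}(K_0-\beta)=C_x$; hence $K_x\subset C_x$.

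For $x\notin[a',b]$ one has $C_x=\emptyset$, so the trivial inclusion $C_x\subset K_x$ holds and the integrand reduces to $g(x)^{n-1}$, matching $|g(x)^{n-1}-s(x)^{n-1}|$. The only conceptually delicate step is the $[a',0]$ case, where the containment runs opposite to the one on $[0,b]$: it relies crucially on the base of $C$ at $x=a'$ having been defined as a dilation of $K_0$ about $\beta$ by exactly the factor $\tfrac{b-a'}{b}$, engineered so that every slice of $K$ on the negative side still fits inside the corresponding slice of $C$.
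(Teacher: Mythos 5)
Your proposal is correct and follows essentially the same route as the paper: both arguments exploit the fact that $C\cap H^{+}\subset K$ while $K\cap([a',0]\times\R^{n-1})\subset C$, each inclusion proved by convexity of $K$ together with the shared vertex $S=(b,\beta)$, and then integrate the resulting one-signed slice differences. You phrase the nesting slice-by-slice and invoke Fubini, whereas the paper states the set containments in $\R^{n}$ and splits $|K\Delta C|$ over three regions, but the underlying computation is identical.
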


\begin{proof}
On one side, by the convexity of $K$, we have that
$$C\cap H^+=\text{Conv}\big[\{0\}\times C_0,\,S\big]=\text{Conv}\big[\{0\}\times K_0,\,S\big]\subset K\cap H^+.$$

On the other side,
$$C\cap ([a',0]\times \R^{n-1})\supset K\cap ([a',0]\times \R^{n-1}).$$
To see why the last containment holds, let $P=(x,y)\in K\cap ([a',0]\times \R^{n-1})$. Then $Q:=\frac{b'}{b'-x}P+\frac{-x}{b'-x}S\in \{0\}\times K_0=\{0\}\times C_0 \subset C$. Since $P$ lies on the line $(SQ)$ with $S,Q\in C$ and $a'\leq x\leq b$, necessarily $P\in C$.

Hence, we can now write
\begin{align*}|K\Delta C|&=|(K\Delta C)\cap H^+|+|(K\Delta C)\cap ([a',0]\times \R^{n-1})|+|(K\Delta C)\cap ((-\infty,a')\times \R^{n-1})|\\
&=|(K\setminus C)\cap H^+|+|(C\setminus K)\cap ([a',0]\times \R^{n-1}))|+|K\cap ((-\infty,a')\times \R^{n-1})|\\
&=\int_0^\infty (g^{n-1}-s^{n-1})+\int_{a^-}^0(s^{n-1}-g^{n-1})+\int_{-\infty}^{a^-}g^{n-1}\\
&=\int_{0}^{\infty}|g^{n-1}-s^{n-1}|+\int_{a^-}^0|g^{n-1}-s^{n-1}|+\int_{-\infty}^{a^-} |g^{n-1}-s^{n-1}|\\&=\int |g^{n-1}-s^{n-1}|.
\end{align*}
\end{proof}
\begin{figure}
    \centering
    \begin{tikzpicture}[scale=1.7]

      % Colors
      \definecolor{mygreen}{RGB}{0,200,40}
      \definecolor{myblue}{RGB}{20,50,200}
      \definecolor{mygray}{RGB}{0,0,0}
      \definecolor{myred}{RGB}{200,0,0}

      % Axes (no arrows)
      \draw[line width=0.8pt] (-3,0) -- (4,0); % x-axis
      \draw[line width=0.8pt] (0,-0.6) -- (0,3.7); % y-axis

      % Values of a, a', b, b'
      \def\a{-2.1}
      \def\ap{-1.3}
      \def\b{1.66}
      \def\bp{3.1}

      % Function definitions
      \def\g#1{-0.32*#1*#1 + 0.17*#1 + 2}
      \def\c#1{-2/\bp*#1 + 2}
      \def\s#1{2*(\b-#1)/\b}

      % Vertical ticks and grey labels at x-axis
      \draw[black, line width=1pt] (\a,0.07) -- (\a,-0.07);
      \node[mygray, below] at (\a,-0.12) {$a$};
      \foreach \x/\name in {\ap/$a'$, \b/$b$, \bp/$b'$} {
        \draw[black, line width=1pt] (\x,0.07) -- (\x,-0.07);
        \node[mygray, below] at (\x,-0.05) {\name};
      }

      \draw[dotted, line width=1pt] (\ap,{\s{\ap}}) -- (\ap,0);
      \draw[dotted, line width=1pt] (\a,{\g{\a}}) -- (\a,0);
      \draw[dotted, line width=1pt] (\b,{\g{\b}}) -- (\b,0);

      % Draw function g
      \draw[myblue, line width=1.4pt, domain=\a:\b, samples=200] plot (\x, {\g{\x}});
      \node[myblue] at (-1.5,1.3) {$g$};

      % Draw function c
      \draw[mygreen, line width=1.4pt, domain=\ap:\bp, samples=2] plot (\x, {\c{\x}});
      \node[mygreen] at (2,0.91) {$c$};

      % Draw function s
      \draw[myred, line width=1.4pt, domain=\ap:\b, samples=2] plot (\x, {\s{\x}});
      \node[myred] at (0.9,0.65) {$s$};

      % Thick points at discontinuities (now at actual values)
      \fill[myblue] (\a, {\g{\a}}) circle (1.2pt);
      \fill[myblue] (\b, {\g{\b}}) circle (1.2pt);
      \fill[mygreen] (\ap, {\c{\ap}}) circle (1.2pt);
      \fill[mygreen] (\bp, {\c{\bp}}) circle (1.2pt);
      \fill[myred] (\ap, {\s{\ap}}) circle (1.2pt);
      \fill[myred] (\b, {\s{\b}}) circle (1.2pt);

    \end{tikzpicture}
    \caption{The graphs of functions $g$, $c$ and $s$ in a general setting (to be exact, the graphs of their restriction to their respective supports; one should keep in mind that the functions are defined on all $\mathbb{R}$).}
    \label{figure2}
\end{figure}
A final proposition is needed before proving the main theorem.

\begin{prop}\label{intcsbound}
    $\int |c^{n-1}-s^{n-1}|\leq64\cdot 3^{n+2}n^{n+2}d^{1/(2n)}$.
\end{prop}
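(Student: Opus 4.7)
The plan is to reduce the non-linear integrand $|c^{n-1}-s^{n-1}|$ to the linear difference $|c-s|$, which admits an explicit closed form. Since $c$ and $s$ are affine functions that agree at $x=0$ with common value $g(0)$, subtracting the two formulas gives
$$c(x)-s(x)=g(0)\,x\,\frac{b'-b}{bb'},\qquad x\in[a',b],$$
so $|c-s|$ is pointwise controlled by the quantity $b'-b$, which Proposition \ref{bprimebbound} bounds by $288n^{2}d^{1/(2n)}$. The passage from $|c-s|$ to $|c^{n-1}-s^{n-1}|$ will use the elementary factorization
$$|c^{n-1}-s^{n-1}|=|c-s|\cdot\sum_{k=0}^{n-2}c^{\,n-2-k}s^{\,k}\leq (n-1)\max(c,s)^{n-2}|c-s|.$$

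The first step is to obtain a uniform bound on $M:=\max(c,s)$. Exactly as in the proof of Proposition \ref{bprimebbound}, the identity $c^{n-1}(a')(b'-a')/n=1$ together with $b'-a'\geq 1/3$ (Lemma \ref{abbounds}) yields $c\leq (3n)^{1/(n-1)}\leq 3n$ throughout $[a',b']$. For $s$, a direct evaluation of $s(a')=g(0)(b-a')/b$ combined with $g(0)\leq 1$ (Lemma \ref{k0bound}), $|a'|\leq|a|<n$ (Proposition \ref{aborder} and Lemma \ref{abbounds}), and $b\geq 1/3$ (Lemma \ref{abbounds}) gives $s\leq 3n+1$ on its support. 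Hence $M\leq 3n+1$, and so $(n-1)M^{n-2}$ is controlled by a quantity of order $3^{n}n^{n-1}$.

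The second step is to integrate $|c-s|$ on $[a',b]$; the explicit pointwise formula reduces this to the elementary computation
$$\int_{a'}^{b}|c-s|\dx=\frac{g(0)(b'-b)}{bb'}\cdot\frac{a'^{2}+b^{2}}{2}\leq \tfrac{9}{2}n^{2}(b'-b),$$
where $a'^{2}+b^{2}\leq (|a'|+b)^{2}\leq n^{2}$ uses $b-a\leq n$ from Lemma \ref{abbounds}. Multiplying by $(n-1)M^{n-2}$ and invoking Proposition \ref{bprimebbound} produces the main contribution, polynomial in $n$ times $3^{n}d^{1/(2n)}$. For the residual region $[b,b']$, where $s\equiv 0$ by convention, a direct computation gives
$$\int_{b}^{b'}c^{n-1}\dx=|K\cap H^{+}|\left(\frac{b'-b}{b'}\right)^{n}\leq 3(b'-b)\leq 864\,n^{2}d^{1/(2n)},$$
which is of strictly lower order in $n$ than the main contribution and is absorbed by the final constant. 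The principal obstacle is not conceptual but careful bookkeeping of the numerical factors, in particular the $(3n+1)^{n-2}\leq 2\cdot 3^{n-2}n^{n-2}$ estimate and the combination of the two regions, in order to match the precise stated bound $64\cdot 3^{n+2}n^{n+2}d^{1/(2n)}$.
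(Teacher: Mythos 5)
Your reduction to $|c-s|$ via the telescoping identity, and the bound $M\leq 3n+1$, are both correct as pointwise statements, and the closed form $c(x)-s(x)=g(0)\,x\,\frac{b'-b}{bb'}$ is right. But the plan then integrates the \emph{crude} pointwise estimate
$|c^{n-1}-s^{n-1}|\leq (n-1)M^{n-2}|c-s|$
with $M$ the \emph{global} supremum. This is where the approach fails: the weight $\sum_{k=0}^{n-2}c^{n-2-k}s^k$ is of order $(n-1)M^{n-2}$ only near $x=a'$ and decays to zero as $x\to b$, so replacing it by its maximum throughout $[a',b]$ overcounts by roughly a factor of $n$. Concretely, your estimates give
$\int_{a'}^b|c^{n-1}-s^{n-1}| \leq (n-1)(3n+1)^{n-2}\cdot \tfrac{9}{2}n^2(b'-b)$,
and substituting $b'-b\leq 288n^2 d^{1/(2n)}$ yields a constant of order $32\cdot 3^{n+2}n^{\,n+3}d^{1/(2n)}$, which carries the power $n^{n+3}$ rather than the required $n^{n+2}$. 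Already for $n=5$ this exceeds the target $64\cdot 3^{n+2}n^{n+2}d^{1/(2n)}$ (roughly $1.3\times 10^{10}$ vs.\ $1.1\times 10^{10}$), and for larger $n$ the gap widens linearly. This is not a matter of sharpening numerical constants: the bound $(a')^2+b^2\leq n^2$ is tight up to constants, as are $1/(bb')\leq 9$ and $g(0)\leq 1$, so no amount of bookkeeping recovers the missing $1/n$.

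The paper avoids this loss by evaluating $\int|c^{n-1}-s^{n-1}|$ \emph{exactly} via cone-volume identities. Writing $\int_{a'}^0(s^{n-1}-c^{n-1})+\int_0^{b'}(c^{n-1}-s^{n-1})=\int s^{n-1}-\int c^{n-1}+2\int_0^{b'}c^{n-1}-2\int_0^{b'}s^{n-1}$, each piece is an explicit cone (or half-cone) volume carrying a factor $\frac1n$, yielding
$\int|c^{n-1}-s^{n-1}|=\tfrac{b-a'}{n}\bigl(s^{n-1}(a')-c^{n-1}(a')\bigr)+\tfrac{b'-b}{n}\bigl(2|K_0|-c^{n-1}(a')\bigr)$.
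The telescoping factorization is then applied only to the single boundary difference $s^{n-1}(a')-c^{n-1}(a')$, and the prefactor $\frac{b-a'}{n}\leq 1$ exactly cancels the $(n-1)$ produced by the telescope. That $\frac1n$ cancellation is the key mechanism your pointwise-and-integrate route forfeits. To repair your argument you would need to keep the pointwise weight $\sum_k c(x)^{n-2-k}s(x)^k$ inside the integral rather than bounding it by its supremum, at which point you are effectively re-deriving the paper's cone-volume identity.
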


\begin{proof}
We first observe that
\begin{align*}
    s(a')-c(a')&=g(0)\left(\frac{b-a'}{b}-\frac{b'-a'}{b'}\right)\leq \frac{b-a'}{b}-\frac{b'-a'}{b'}\\
    &\leq  -a\left(\frac{1}{b}-\frac{1}{b'}\right)\leq n\frac {b'-b}{bb'}\leq 2592n^3d^{1/(2n)}\stepcounter{equation}\tag{\theequation}\label{scbound}
\end{align*}

where we have used lemma \ref{abbounds} in the last two inequalities. Moreover,

\begin{equation}\label{sbound}
    c(a')\leq s(a')= g(0)\frac{b-a'}{b}\leq \frac{b-a}{1/3}\leq3n.
\end{equation}
Now, obviously $c(x)\leq s(x)$ holds for $x\leq 0$ and $c(x)\geq s(x)$ holds for $x\geq 0$. Thus
\begin{align*}
    \int |c^{n-1}-s^{n-1}|&=\int_{a'}^0 (s^{n-1}- c^{n-1})+\int_0^{b'}(c^{n-1}-s^{n-1})\\
    &=\int s^{n-1}-\int c^{n-1}+2\int_0^{b'}c^{n-1}-2\int_0^{b'}s^{n-1}\\
&=s^{n-1}(a')\frac{b-a'}{n}-c^{n-1}(a')\frac{b'-a'}{n}+2|K_0|\frac{b'-b}{n}\\
&=\frac{b-a'}{n}(s^{n-1}(a')-c^{n-1}(a'))+\frac{b'-b}{n}(2|K_0|-c^{n-1}(a')).\stepcounter{equation}\tag{\theequation}\label{lasteq}\end{align*}

Moreover, by equations \eqref{scbound} and \eqref{sbound}, \begin{align*}
s^{n-1}(a')-c^{n-1}(a')&=(s(a')-c(a'))(s^{n-2}(a')+\ldots+c^{n-2}(a'))\\
&\leq 2592n^3d^{1/(2n)}(n-1)(3n)^{n-2}\leq 32\cdot 3^{n+2}n^{n+2}d^{1/(2n)}.
\end{align*}

Finally, $$2|K_0|-c^{n-1}(a')\leq 2|K_0|+c^{n-1}(a')\leq 2+(3n)^{n-1}\leq 3^{n}n^{n-1}.$$

Thus, substituting in \eqref{lasteq}, and making use of lemma \ref{abbounds} and proposition \ref{bprimebbound}, we get \begin{align*}\int |c^{n-1}-s^{n-1}| &\leq \frac{n}{n}\big(32\cdot 3^{n+2}n^{n+2}d^{1/(2n)}\big)+ \frac{288n^2d^{1/(2n)}}{n}\big(3^{n}n^{n-1}\big)\\ &\leq 64\cdot 3^{n+2}n^{n+2}d^{1/(2n)}.\end{align*}\end{proof}
We can finally prove our main result.

\begin{proof}[Proof of theorem \ref{maintheo}]

First,
\begin{align*}
    A(K)\leq |K\Delta C|&=\int |g^{n-1}-s^{n-1}|\\
    &\leq \int |h|+\int|c^{n-1}-s^{n-1}|\\
    &\leq 16n^{3/2}\sqrt{d} +64\cdot 3^{n+2}n^{n+2}d^{1/(2n)},
\end{align*} the last step following from \eqref{inthbound} and proposition \ref{intcsbound}. Since $d\leq 1$, we have $\sqrt{d}\leq d^{1/(2n)}$ and thus
$$A(K)\leq 80\cdot 3^{n+2}n^{n+2}d^{1/(2n)}\leq 3^{n+6}n^{n+2}d^{1/(2n)}.$$
Now, $d=t^{1/n}-q_n^{1/n}$ where $t=|K\cap H^+|$. By the mean value theorem, there exists some $\xi\in [q_n,t]$ such that $d=\frac 1 n\xi^\frac{1-n}{n} (t-q_n)$ and, because $\xi\geq \frac 13$, we have that $\xi^{\frac{1-n}{n}}\leq 3$ thus
$$d\leq 3(t-q_n)$$
which in turn implies that
$$A(K)\leq 3^{n+6}n^{n+2}(3(t-q_n))^{1/(2n)}\leq 3^{n+7}n^{n+2}(t-q_n)^{1/(2n)}.$$

\end{proof}


\begin{thebibliography}{9}
    \bibitem{Gru1} Grünbaum, B. (1960). Partitions of mass distributions and of convex bodies by hyperplanes. \textit{Pacific Journal of Mathematics}, \textit{10}, 1257–1261.
    \bibitem{Groemer} Groemer, H. (2000). Stability Theorems for Two Measures of Symmetry. \textit{Discrete Comput Geom,} \textit{24}, 301–312
    
    \bibitem{Lan1} Lang, R. (1986). A note on the measurability of convex sets. \textit{Archiv der Mathematik (Basel)}, \textit{47}(1), 90–92.
    
    \bibitem{Fig1} Figalli, A. (2014). Stability in geometric and functional inequalities. \textit{European Congress of Mathematics Kraków}, 2 – 7 July, 2012 (pp. 585–599).
    
    \bibitem{BM1} Gardner, R. J. (2002). The Brunn–Minkowski inequality. \textit{Bulletin of the American Mathematical Society (New Series)}, \textit{39}(3), 355–405.
    
    \bibitem{KK} Myroshnychenko, S., Stephen, M., \& Zhang, N. (2018). Grünbaum's inequality for sections. \textit{Journal of Functional Analysis}, \textit{275}(9), 2516–2537.
    
    \bibitem{JJ} Stephen, M., \& Zhang, N. (2017). Grünbaum's inequality for projections. \textit{Journal of Functional Analysis}, \textit{272}(6), 2628–2640.
    
    \bibitem{AA} Stephen, M., \& Yaskin, V. (2019). Applications of Grünbaum-type inequalities. \textit{Transactions of the American Mathematical Society}, \textit{372}(9), 6755–6769.
    
    \bibitem{FF} Alonso-Gutiérrez, D., Marín Sola, F., Martín Goñi, J., \& Yepes Nicolás, J. (2024). A general functional version of Grünbaum's inequality. arXiv.
    
    \bibitem{MM} Milman, V. D. (1986). Inégalité de Brunn-Minkowski inverse et applications à la théorie locale des espaces normés. \textit{Comptes Rendus de l'Académie des Sciences, Série I}, \textit{302}(1), 25–28.
    
    \bibitem{BB} Ball, K. (1991). Volume ratios and a reverse isoperimetric inequality. \textit{Journal of the London Mathematical Society}, \textit{44}(2), 351–359.
    
    \bibitem{SS} Figalli, A., \& Jerison, D. (2017). Quantitative stability for the Brunn–Minkowski inequality. \textit{Advances in Mathematics} (Vol. 314, pp. 1–47).

    
    \bibitem{HH} Hall, R. R. (1992). A quantitative isoperimetric inequality in n-dimensional space. \textit{Journal für die reine und angewandte Mathematik}, \textit{428}, 161–176.
    
    \bibitem{HHH} Hall, R. R., Hayman, W. K., \& Weitsman, A. W. (1991). On asymmetry and capacity. \textit{Journal d’Analyse Mathématique}, \textit{56}, 87–123.
\end{thebibliography}
\end{document}